\documentclass[11pt]{article}

\usepackage[margin=1.15in]{geometry}
\usepackage{amsmath,amssymb,amsthm,mathtools}
\usepackage[colorlinks=true,citecolor=blue,linkcolor=blue,urlcolor=blue]{hyperref}
\usepackage{microtype}

\usepackage{setspace}
\newtheorem{theorem}{Theorem}[section]

\newtheorem{lemma}[theorem]{Lemma}

\newtheorem{problem}[theorem]{Problem}
\theoremstyle{remark}

\title{Tight degeneracy bounds in online Ramsey games\thanks{Center for Discrete Mathematics, Fuzhou University, Fujian 350108, China.
Email: chenwenfj@163.com, linqizhong@fzu.edu.cn, 3503844078@qq.com. Supported in part by the National Key R\&D Program of China (Grant No. 2023YFA1010202) and the NSFC (No.\ 12571361).}}
\author{
Wen Chen\quad \quad
Qizhong Lin\quad \quad
Shixi Song
}
\date{}
\hypersetup{
  pdftitle={Tight degeneracy bounds in online Ramsey games},
  pdfsubject={Tight degeneracy bounds for online Ramsey games},
  pdfkeywords={online Ramsey theory, degeneracy, survival game, tight bound}
}

\begin{document}
\maketitle

\begin{abstract}
In the \(q\)-color online Ramsey game, Builder and Painter play on an infinite independent set of vertices. At each step, Builder draws an edge and Painter immediately assigns it one of \(q\) colors. Builder's aim is to force a monochromatic copy of a fixed graph \(H\).
We prove that, for every \(q\ge 2\) and \(d\ge 1\), Builder can force a monochromatic copy of any \(d\)-degenerate graph \(H\) while only drawing a \(d\)-degenerate graph. The bound \(d\) is tight, and this resolves in the affirmative a problem of Conlon, Fox and Sudakov.

\medskip
\noindent\textbf{Keywords.} Online Ramsey theory, degeneracy,
survival game.
\end{abstract}

\section{Introduction}

An online Ramsey game is played by Builder and Painter on an infinite
set of initially isolated vertices. Builder exposes one edge at a time,
and Painter immediately colors it with one of \(q\) colors. Given a
target graph \(H\), Builder seeks to force a monochromatic copy of \(H\),
regardless of Painter's choices. This framework originates in work of
Beck~\cite{Beck} and, independently, Friedgut, Kohayakawa, R\"odl,
Ruci\'nski and Tetali~\cite{FKRRT}; see also Kurek and
Ruci\'nski~\cite{KR}.

We study how structural restrictions on Builder's graph affect the
outcome of the game. Grytczuk, Ha\l uszczak and Kierstead~\cite{GHK}
introduced this perspective by requiring the graph formed by the
exposed edges to remain in a prescribed class throughout play.
For a nonnegative integer-valued graph parameter \(\pi\) that is
monotone under taking subgraphs, the general question is whether
a bound on \(\pi(H)\) guarantees a winning strategy in which
\(\pi(G)\) remains bounded solely in terms of \(\pi(H)\) and the
number of colors, where \(G\) is the graph drawn by Builder.
Parameters of interest include chromatic number, degeneracy,
treewidth, thickness and genus. For chromatic number, the optimal
bound is known: every graph \(H\) can be forced while Builder keeps
the chromatic number of the exposed graph at most \(\chi(H)\).
This was proved for two colors in~\cite{GHK} and extended to any
number of colors in~\cite{KK}. Related work considers restrictions
on maximum degree~\cite{BGKMSW}, planarity~\cite{Petrickova}, and
the classes of \(F\)-free and \(K_4\)-minor-free graphs~\cite{CCJO}.

Here we focus on the case where the graph parameter is degeneracy.
A graph \(G\) is \emph{\(d\)-degenerate} if its vertices admit an ordering
\(v_1,\ldots,v_n\) such that each \(v_i\) has at most \(d\) neighbors among
\(v_{i+1},\ldots,v_n\). Such an ordering is called a
\emph{\(d\)-degeneracy ordering} of \(G\).

For \(q\ge2\), let \(f_q(d)\) denote the least \(k\) such that, in the
\(q\)-color online Ramsey game, Builder can force a monochromatic copy of any
\(d\)-degenerate graph while only drawing a \(k\)-degenerate graph.
Necessarily,
\begin{align}\label{tri-low}
 f_q(d)\ge d.
\end{align}
In this language, the problem of~\cite{GHK} is whether \(f_q(d)\) is
finite for all \(q\ge2\) and \(d\ge1\).

For \(d=1\) and \(q\ge2\), it is known~\cite{GHK} that \(f_q(1)=1\);
that is, forests can be forced while Builder draws only forests.
Conlon, Fox and Sudakov~\cite{CFS} resolved the finiteness problem by
proving the upper bound
\[
f_q(d)\le qd-(q-1)
\]
for every \(q\ge2\) and \(d\ge1\). 

For \(d>1\), however, there is still a gap between the lower and upper bounds. Conlon, Fox and Sudakov
then asked whether the lower bound \eqref{tri-low} is always sharp.

\begin{problem}[Conlon, Fox and Sudakov~\cite{CFS}]\label{p:cfs}
For every \(q\ge2\) and \(d\ge1\), is \(f_q(d)=d\)?
\end{problem}

Our main result resolves this problem in the affirmative.

\begin{theorem}\label{thm:main}
For every \(q\ge2\) and \(d\ge1\), in the \(q\)-color online Ramsey game,
Builder can force a monochromatic copy of any \(d\)-degenerate graph \(H\)
while only drawing a \(d\)-degenerate graph. Consequently,
\[
 f_q(d)=d.
\]
\end{theorem}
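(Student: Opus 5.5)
We prove the theorem by induction on \(d\), using a stronger \emph{survival game} statement that is robust under changing the number of colors. Fix \(q\) and a \(d\)-degenerate target \(H\) with degeneracy ordering \(u_1,\dots,u_m\); each \(u_i\) has at most \(d\) neighbors among \(u_{i+1},\dots,u_m\). We embed \(H\) backwards: \(u_m\) first, then \(u_{m-1}\), and so on. When \(u_i\) is embedded, its at most \(d\) already-embedded neighbors are exactly its later neighbors. So each embedding step consists of a \emph{new} vertex \(x\) joined to at most \(d\) \emph{old} vertices. If Builder only ever creates a new vertex and joins it to at most \(d\) previously existing vertices, the drawn graph \(G\) is \(d\)-degenerate automatically: list the vertices in reverse order of creation. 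This is the key invariant, and the whole strategy must respect it. Builder may never add an edge between two existing vertices.

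The difficulty is that Painter colors the \(d\) edges from \(x\) in different colors. We therefore grow a large auxiliary structure and use a pigeonhole/tree argument in the style of the forest case \(f_q(1)=1\) of \cite{GHK}. The strengthened statement is as follows. For every \(d\)-degenerate \(H\) and every \(q\), Builder can, drawing only a graph in which each new vertex sends at most \(d\) edges backwards, force a copy of \(H\) whose vertices are embedded in the backward order and which is monochromatic.

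We prove this by induction on \(\sum_i (\text{back-degree of } u_i)\), or more conveniently by a potential/``survival'' game. Painter starts with a budget of colors per position. Each time a back-edge of a partial copy receives a color different from the one targeted, that color is ``spent'' for that partial embedding. Builder maintains many partial embeddings in parallel, organized as a \(q\)-ary tree of candidate images. A new vertex \(x\) is joined to the \(d\) images of the back-neighbors of \(u_i\) within \emph{one} chosen branch. If all \(d\) edges get a common color \(c\), the branch extends in color \(c\). Otherwise, the color pattern on those edges refines the branch's record. Since there are only \(q^d\) patterns, after sufficiently many new vertices hung on the same old vertices, pigeonhole gives many new vertices with identical patterns.

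The inductive reduction is to treat each non-monochromatic pattern as a game on fewer effective colors or a smaller degree. Vertices in a pattern class \(P\) are attached to a fixed \(d\)-set. Restricted to the colors appearing in \(P\), each pattern class behaves like a game where every new vertex has a prescribed color to each old neighbor. We then recurse so that later vertices only need edges to these pattern classes. The main obstacle is the bookkeeping that makes this induction close: we need a well-founded measure, lexicographic in (number of colors still available at each back-edge slot, \(|V(H)|\)), that strictly decreases whenever Painter deviates, while never adding a second batch of back-edges to an old vertex's ``forward'' side. My first attempt is the survival-game formulation. Painter ``loses a life'' in a color at a slot whenever she avoids it, and Builder wins once some color has survived at every slot of \(H\). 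Finiteness then follows by a compactness/pigeonhole count over the \(q^d\) patterns, iterated \(|V(H)|\) times. The lower bound \(f_q(d)\ge d\) is \eqref{tri-low}, since the drawn graph contains \(H\), so equality follows.
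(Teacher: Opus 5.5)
There is a genuine gap, and it is structural, not just unfinished bookkeeping. Your invariant is that each vertex, when created, is joined to at most \(d\) earlier vertices, and no edge is ever added between two existing vertices. For \(d\ge 2\), no Builder strategy that keeps this invariant can win.

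Let Painter use two colors: color an edge red if it is the first edge drawn at one of its endpoints, and blue otherwise. The red graph is always a forest, because the last-drawn edge of any red cycle would have both endpoints already touched. Under your invariant, a vertex \(v\) has no edges before its own block of back-edges, so its first back-edge is red. Hence \(v\) has at most \(d-1\) blue edges to earlier-created vertices, and the blue graph is \((d-1)\)-degenerate in creation order. So no \(H\) of degeneracy exactly \(d\ge 2\) (e.g.\ \(K_{d+1}\)) ever appears monochromatically, whatever tree or pattern-class bookkeeping you add. Independently, the proposal never supplies the reduction: the ``well-founded measure'' and the recursion on non-monochromatic pattern classes are explicitly left open. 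Only the lower bound \(f_q(d)\ge d\) is fine.

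The missing idea is that the degeneracy ordering must \emph{not} be the creation order.

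\textbf{Oversample and discard.} In the paper, a witness \(x_i\) is joined to \(p=q(d-1)+1>d\) live vertices. Pigeonhole gives \(d\) of them with a common color. The other \(p-d\) are discarded and placed \emph{before} \(x_i\) in the ordering. Each discarded vertex then has the single later neighbor \(x_i\), and \(x_i\) has exactly \(d\) later neighbors.

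\textbf{Survival game.} Discarding must not destroy the ability to reuse survivors. So Builder simulates a winning Presenter strategy in the Kierstead--Konjevod \((p,d,R)\)-survival game, with Chooser's responses dictated by Painter's majority colors. This produces an \(R\)-set of live vertices whose every \(d\)-subset has a witness.

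\textbf{Ramsey and embedding.} Taking \(R=r_d(m;q)\), hypergraph Ramsey makes the witness color uniform on an \(m\)-subset. Iterate this \(q(n-2)+1\) times on nested active sets and use pigeonhole to pick \(n-1\) stages of a common color. Embedding \(H\) backwards along its degeneracy ordering then gives the monochromatic copy, while the drawn graph stays \(d\)-degenerate throughout.
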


Our proof starts from the
strategy of Conlon, Fox and Sudakov~\cite{CFS}, in which a witness vertex is
joined to \(p=q(d-1)+1\) vertices. The key ingredient is a discarding argument from~\cite{KK}. After the colors of the \(p\) incident edges are revealed, the
pigeonhole principle guarantees that at least \(d\) of them have the same
color. We keep the corresponding \(d\) neighbors and discard the remaining
\(p-d\) neighbors. Here, ``discard'' means that these vertices are no longer
used in the subsequent construction, while they and all previously drawn
edges remain in the online graph. The discarded vertices can be placed before
the witness vertex in a degeneracy ordering, so the witness has only \(d\)
later neighbors. To iterate this discarding procedure, we use the survival
game theorem of Kierstead and Konjevod~\cite{KK} together with the hypergraph
Ramsey theorem. This allows Builder to maintain a \(d\)-degenerate graph
throughout the game and thereby achieve the optimal bound \(f_q(d)=d\).

\section{Proof of the main theorem}\label{sec:proof}

To implement the discarding argument outlined in the Introduction, we use
the survival game of Kierstead and Konjevod~\cite{KK} together with the
hypergraph Ramsey theorem.

For positive integers \(p,s,t\) with \(s\le p\), the \((p,s,t)\)-\emph{survival game} is played
by Presenter and Chooser on a finite set of vertices, called \emph{live vertices}. In each round
\emph{Presenter} chooses a \(p\)-set \(P\) of live vertices, \emph{Chooser} selects an \(s\)-set
\(X\subseteq P\), the vertices of \(P\setminus X\) are \emph{discarded}, and \(X\) is
recorded as an \(s\)-uniform hyperedge. Presenter wins once the hypergraph on
the live vertices contains a copy of \(K_t^{(s)}\).

\begin{theorem}[Kierstead and Konjevod \cite{KK}]\label{thm:survival}
For all positive integers \(p,s,t\) with \(s\le p\), Presenter has a winning
strategy in the \((p,s,t)\)-survival game from a sufficiently large finite set
of vertices.
\end{theorem}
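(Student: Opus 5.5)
The plan is a double induction: an outer induction on \(s\), and inside it an iterative construction of the clique one vertex at a time. Throughout I use the hypergraph Ramsey theorem to clean up Chooser's adaptive choices.

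\emph{Base cases.} If \(t<s\), or if \(s=p\), the statement is trivial. When \(s=p\) nothing is ever discarded, so Presenter simply presents every \(p\)-subset of a fixed set of size \(t\). For \(s=1\), Presenter presents \(t\) pairwise disjoint \(p\)-sets. Each chosen singleton is live and recorded, so the \(t\) chosen vertices form a \(K_t^{(1)}\); a set of \(tp\) vertices suffices.

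\emph{Inductive step: the link game.} Assume Presenter wins every \((p',s-1,t')\)-survival game with \(s-1\le p'\) from some finite set. I would build live vertices \(v_1,\dots,v_m\) one at a time, with \(m\) much larger than \(t\), keeping a large reservoir \(R_i\) of fresh live vertices. The idea is to reduce the \(s\)-uniform game to an \((s-1)\)-uniform game in the link of the vertices already built.
\begin{itemize}
\item To put \(v_i\) into the link structure, Presenter presents sets \(P=\{v_i\}\cup Q\) with \(Q\subseteq R_i\) of size \(p-1\).
\item Whenever Chooser keeps \(v_i\), the set \(X\setminus\{v_i\}\) is an \((s-1)\)-subset of \(Q\) and the rest of \(Q\) is discarded. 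This is exactly a \((p-1,s-1)\)-move in the link of \(v_i\).
\item Running the inductive strategy inside \(R_i\) therefore produces a large \((s-1)\)-clique in the link of \(v_i\). This clique becomes the next reservoir \(R_{i+1}\).
\item Iterating gives a nested sequence in which \(\{v_{i_1},\dots,v_{i_{s-1}}\}\cup\{x\}\) is an edge whenever \(x\) lies in a later reservoir. Here one must also carry along, through the inductive hypothesis, the requirement that the link cliques contain all \((s-1)\)-subsets needed among the later \(v_j\).
\end{itemize}
To make the last point work, I would strengthen the inductive statement to an ordered version. In that version, every \(s\)-set of the final live set, ordered by creation time, is recorded. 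Sizes are then chosen by iterating the Ramsey-type bounds from the inductive hypothesis.

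\emph{Main obstacle.} The difficulty is that Chooser may discard the centre \(v_i\) itself by choosing \(X\subseteq Q\). I would handle this in two steps.
\begin{itemize}
\item First, Presenter keeps many candidate centres \(c_1,\dots,c_M\) in parallel. Each round is presented with one candidate centre plus \(p-1\) reservoir vertices. If Chooser discards the centre, the round still records an \(s\)-edge inside the reservoir, and only \(p-s\) reservoir vertices are lost.
\item Second, I would colour the \(s\)-subsets of the reservoir according to whether they were recorded in such centre-discarding rounds, and apply the hypergraph Ramsey theorem. This gives a large subset that is homogeneous in this respect.
\end{itemize}
If the homogeneous colour is ``recorded'', we already have \(K_t^{(s)}\). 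Otherwise, Chooser kept the centre in all relevant rounds on that subset, and the link argument above proceeds.

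The delicate part is bookkeeping rather than any single inequality. Discards are global, so the homogeneous set must be chosen among vertices that are still live, and the inductive strategies must be run on disjoint pieces so that they do not interfere with one another. I would first attempt to fix the parameters so that each centre gets its own disjoint block of the reservoir. Only then would I combine the blocks using the ordered \((s-1)\)-hypothesis and one final application of hypergraph Ramsey to the sequence of surviving centres.
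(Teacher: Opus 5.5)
The paper does not prove this statement. It quotes it from Kierstead and Konjevod and uses it as a black box, so your argument has to stand on its own. Your base cases are fine, and so is the case where Chooser always keeps the current centre. Each link clique \(R_{i+1}\) is complete and contains all later centres, so every \(s\)-set of centres is an edge (take its earliest vertex as \(v_i\)). The ordered strengthening is therefore unnecessary.

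The genuine gap is the step you flag as the main obstacle. Consider a Chooser who discards the designated centre every time it is presented; such a strategy exists because the centre is determined by Presenter's strategy and the history. Then no centre ever survives, and any win must come from the \(s\)-sets \(X\subseteq Q\) recorded in centre-discarding rounds. Restricted to \(Q\), each such round is exactly a legal move of the \((p-1,s)\)-survival game. So you need a \((p-1,s)\)-strategy choosing the sets \(Q\). Induction on \(s\) alone does not supply one, and in your scheme the \(Q\)'s are dictated by the \((p-1,s-1)\)-link strategy anyway.

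Your Ramsey colouring cannot substitute for this. Its ``recorded'' alternative, on a large live set, is literally the \(K_t^{(s)}\) you are trying to force, and nothing makes it occur. Its ``not recorded'' alternative says nothing about centres. In particular, ``Chooser kept the centre in all relevant rounds on that subset'' is vacuous: a centre-keeping round discards \(p-s\ge 1\) vertices of its \(Q\), so no such \(Q\) lies inside a set of live vertices. Moreover, a Ramsey step applied after play cannot revive a discarded centre. Nor can it restore reservoir vertices burnt by partially executed link strategies whose centres later died.

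The fallback of giving each centre its own disjoint block makes matters worse. It destroys the nesting the clique needs, namely that each later centre lies inside every earlier centre's link clique. With disjoint blocks, no \(s\)-set containing two centres is ever presented. A final application of Ramsey's theorem to the surviving centres cannot create edges that were never recorded; it can only select among recorded ones.

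What is missing is a mechanism that turns centre-discarding rounds into guaranteed progress. One option is an additional induction on \(p\), feeding those rounds to a \((p-1,s)\)-strategy. You would then still have to handle the fact that Presenter commits to \(Q\) before learning whether the round will be a link move or a \((p-1,s)\)-move. As a minor point, a centre-discarding round loses only \(p-1-s\) reservoir vertices, not \(p-s\).
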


We shall combine this result with the hypergraph Ramsey number~\cite{ErdosRado,Ramsey}.
For all positive integers \(d,m,q\), let \(r_d(m;q)\) be the least integer \(R\) such that
every \(q\)-coloring of the edges of \(K_R^{(d)}\) contains a monochromatic copy of \(K_m^{(d)}\).

Let \(G\) be the graph consisting of the edges drawn so far. We call a set
\(U\subseteq V(G)\) \emph{active} if no edge has been drawn between two
vertices of \(U\). We say that \((G,U)\) is \emph{\(d\)-admissible} if \(U\) is
active and \(G\) admits a \(d\)-degeneracy ordering in which all vertices of
\(U\) appear last.

The following lemma is the key step of the proof of our main theorem. It describes how Builder
can continue from a \(d\)-admissible pair and obtain a new
\(d\)-admissible pair with a smaller active set, together with many same-colored witnesses.

\begin{lemma}\label{lem:step}
Let \(q,d,m\) be positive integers with \(q\ge2\) and \(m\ge d\).
There exists an integer \(N=N(q,d,m)\) such that the following holds.
For every \(d\)-admissible pair \((G,U)\) with \(|U|\ge N\), Builder has a strategy to guarantee a new \(d\)-admissible pair \((G',V)\) with \(V\subseteq U\), \(|V|=m\) such that for some color \(c\), for every \(D\in\binom{V}{d}\), there is a vertex \(x_D\in U\setminus V\) joined in \(G'\) to every vertex of \(D\) by an edge of color \(c\).
\end{lemma}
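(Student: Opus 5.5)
Builder will simulate Presenter's winning strategy in a survival game on the live vertices of \(U\), with \(p=q(d-1)+1\) and \(s=d\), and then finish with hypergraph Ramsey. Set \(m'=r_d(m;q)\) and let \(N\) be large enough for Presenter to win the \((p,d,m'+?)\)-survival game via Theorem~\ref{thm:survival}; the exact value of \(t\) is fixed below. Throughout, Builder keeps a set \(L\subseteq U\) of live vertices and a set \(W\subseteq U\setminus L\) of used vertices (witnesses and discarded vertices). Initially \(L=U\) and \(W=\emptyset\).

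In each round, Presenter names a \(p\)-set \(P\subseteq L\). Builder takes a fresh live vertex \(x\in L\setminus P\), which is available as long as \(|L|>p\), and draws the \(p\) edges from \(x\) to \(P\). By pigeonhole, some color \(c\) appears on at least \(d\) of these edges. Chooser picks a \(d\)-set \(X\subseteq P\) of such neighbours and records \(X\) as a hyperedge labelled by \(c\), and Builder records \(x=x_X\). The vertex \(x\) and the vertices of \(P\setminus X\) are removed from \(L\). Presenter's strategy does not see \(x\) itself, which is simply discarded from its point of view. This discard is harmless: Theorem~\ref{thm:survival} is a statement about any vertex set large enough, and Presenter's win does not depend on Chooser's choices, so deleting one extra vertex per round only costs a larger \(N\). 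I expect to handle this by letting Presenter play on a set in which every round also discards one designated vertex, or more simply by running the game on half of \(U\) and taking witnesses from the other half.

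At the end, Presenter has won: there is a set \(T\) of \(t\) live vertices all of whose \(d\)-subsets are recorded hyperedges. Each hyperedge carries one of \(q\) labels, so taking \(t=r_d(m;q)\) gives \(V\subseteq T\) with \(|V|=m\) whose \(d\)-subsets all carry the same color \(c\). Each \(D\in\binom{V}{d}\) then has a witness \(x_D\in U\setminus V\) joined to \(D\) entirely in color \(c\). Note that \(V\) is still active, because the only edges drawn go from a witness to live vertices, and witnesses are never live afterwards.

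The main step is checking \(d\)-admissibility of \((G',V)\). Take the old ordering of \(G\) with \(U\) last, but reorder \(U\) as follows. First come the vertices of \(U\) that are neither witnesses nor in \(V\) (discarded or never used), in any order. Then come the witnesses, in the order they were created. Last come the vertices of \(V\). Vertices of \(V(G)\setminus U\) gain no new edges, so their forward degree is unchanged and at most \(d\). The set \(U\) was active, so the only new edges are between a witness and its presented set. A non-witness vertex of \(U\setminus V\) has neighbours in \(U\) only among the witnesses, and these lie later in the order. The same holds for its neighbours outside \(U\), which all lie earlier. Such a vertex might then have many later neighbours, so this placement needs more care.

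The fix is to put each such vertex just before the witness that discarded it, that is, at the moment it left \(L\). Concretely, order \(U\setminus V\) by removal time, and within each round put \(P\setminus X\) first and then \(x\). A vertex discarded in round \(i\) is adjacent only to witnesses of rounds \(\le i\), all of which are placed earlier except \(x_i\). Its forward degree is therefore at most \(1\le d\).

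Witness \(x_i\) is adjacent to \(P_i\). Of these, \(P_i\setminus X_i\) are placed before it. The vertices of \(X_i\) are either discarded later or lie in \(V\), hence after it. So its forward degree is \(d\). The vertex \(x_i\) was itself live and unused before round \(i\), so it has no earlier edges into \(U\). Vertices of \(V\) come last and form an independent set, so their forward degree is \(0\). This yields a \(d\)-degeneracy ordering with \(V\) last.

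I expect the main obstacle to be exactly this bookkeeping of removal order. Making the argument rigorous requires that every neighbour of a discarded vertex is a witness created no later than its discard time.
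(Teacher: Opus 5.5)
Your proposal is correct and is essentially the paper's proof. It uses the same ingredients: the $(q(d-1)+1,\,d,\,r_d(m;q))$-survival game simulated on one half $A$ of $U$ with witnesses drawn from the other half $B$ (use this version rather than the ``harmless discard'' one, which Theorem~\ref{thm:survival} does not directly justify), pigeonhole on each star, hypergraph Ramsey on the recorded $d$-sets, and the ordering $P_1\setminus X_1,x_1,P_2\setminus X_2,x_2,\ldots$ with $V$ placed last. Only two bookkeeping points remain: check that $B$ supplies enough witnesses (each round discards $(q-1)(d-1)\ge1$ vertices when $d\ge2$, and for $d=1$ one simply presents each singleton once), and place the never-removed vertices of $U\setminus V$ after all witnesses, where they have no later neighbours.
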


\begin{proof}
Set \(p=q(d-1)+1\) and \(R=r_d(m;q)\), and fix a winning Presenter strategy
for the \((p,d,R)\)-survival game on \(N_0\) live vertices, where \(N_0\) is
given by Theorem~\ref{thm:survival}.

When \(d\ge2\), each round discards \(p-d=(q-1)(d-1)\ge1\) vertices, so the
survival game lasts at most \(N_0\) rounds. When \(d=1\), we have \(p=1\) and
\(s=d=1\). Thus in each round the presented set consists of a single vertex,
which Chooser must keep, so no vertex is discarded. In this case we simply
take \(N_0=R\) and present each singleton once. Thus in all cases we may
assume that the fixed Presenter strategy lasts at most \(N_0\) rounds.

Set \(N=2N_0\). Given a \(d\)-admissible pair \((G,U)\) with \(|U|\ge N\),
choose disjoint subsets \(A,B\subseteq U\) with \(|A|=|B|=N_0\).
We shall construct a graph \(G'\) extending \(G\) as follows. Initially set
\(G'=G\). During the simulation below, whenever Builder draws an edge, add
it to \(G'\). At the end of the simulation, \(G'\) is the graph consisting
of \(G\) together with all edges drawn by Builder.

Builder now simulates the fixed Presenter strategy on \(A\).
In round \(i\) of the survival game, let \(P_i=\{v_{i,1},\ldots,v_{i,p}\}\)
be the set prescribed by the Presenter strategy.
Builder chooses an unused vertex \(x_i\in B\) and, over the next \(p\) online rounds, draws the edges
\(x_iv_{i,1},\ldots,x_iv_{i,p}\) one by one, each edge being colored immediately by Painter.
These edges form a star with center \(x_i\) and leaf set \(P_i\).
After all \(p\) edges have been colored,
Builder selects, by the pigeonhole principle, a \(d\)-set \(X_i\subseteq P_i\)
for which all edges from \(x_i\) to \(X_i\) have a common color \(\gamma_i\).

We now feed \(X_i\) to the fixed Presenter strategy as Chooser's response.
This is legitimate because \(X_i\) is a \(d\)-subset of \(P_i\), and Chooser
is allowed to choose any such subset. Although \(X_i\) is induced by
Painter's coloring rather than chosen by an independent Chooser, the
Presenter strategy is winning against every sequence of legal responses.
Thus the simulation remains valid. We record \(X_i\) as a hyperedge and
discard \(P_i\setminus X_i\). If the survival game has not yet been won, we
let \(P_{i+1}\) be the next \(p\)-set prescribed by the fixed Presenter strategy.

The discarded vertices are not used in later survival game rounds. The
edges drawn to them remain in the online graph. Since there are at most \(N_0\)
survival game rounds, \(B\) supplies enough distinct vertices \(x_i\).

Since the Presenter strategy is winning against every sequence of legal
responses, the recorded hypergraph on the live vertices of \(A\) eventually
contains a copy of \(K_R^{(d)}\).
Let \(W\) be the vertex set of this copy. Every \(D\in\binom{W}{d}\) has appeared
as \(X_i\) in some round. For each \(D\in\binom{W}{d}\), choose a round \(i_D\) such that
\(X_{i_D}=D\). Assign \(D\) the color \(\gamma_{i_D}\) and let
\(x_D=x_{i_D}\) be its witness. This defines a \(q\)-coloring of the complete \(d\)-uniform
hypergraph on \(W\). Note that we are only borrowing the colors used by
Painter on the star edges to color the auxiliary hypergraph; no edge of the
original graph is recolored.

By the choice \(R=r_d(m;q)\), there is an \(m\)-set \(V\subseteq W\) whose
\(d\)-subsets all have the same assigned color, say \(c\). Consequently,
for every \(D\in\binom{V}{d}\), all edges from \(x_D\) to \(D\) have color \(c\).
Since \(x_D\in B\) and \(V\subseteq A\), we have \(x_D\in U\setminus V\).
Since \(A\) is active and \(V\subseteq A\), the set \(V\) is active.

It remains to verify the required ordering condition for \((G',V)\).
Suppose that the survival game lasts \(\ell\) rounds. Order \(U\setminus V\)
in the form
\[
 P_1\setminus X_1,\ x_1,\ 
 P_2\setminus X_2,\ x_2,\ \ldots,\ 
 P_\ell\setminus X_\ell,\ x_\ell,
\]
followed by all remaining vertices of \(U\setminus V\), and regard all
vertices of \(V\) as placed last. The order within each of the displayed
sets, and among the remaining vertices, is arbitrary.

In this ordering, every vertex of \(P_i\setminus X_i\) has at most one later
neighbor, namely \(x_i\), while the later neighbors of \(x_i\) are precisely
the vertices of \(X_i\). Every remaining vertex of \(U\setminus V\) has no
later neighbor. Thus every vertex of \(U\setminus V\) has at most \(d\)
later neighbors in \(G'[U]\).

Place the vertices of \(V(G)\setminus U\) first, in their original order,
followed by the vertices of \(U\setminus V\) in the order described above.
Since no new edge is incident with a vertex
of \(V(G)\setminus U\), its number of later neighbors is unchanged. Together
with the fact that \(V\) is active, this shows that \((G',V)\) is
\(d\)-admissible.
\end{proof}

\begin{proof}[Proof of Theorem~\ref{thm:main}]

We now iterate Lemma~\ref{lem:step} to obtain nested active sets, and then
embed \(H\) using the witnesses from stages of a common color.

Let \(H\) be a \(d\)-degenerate graph and set \(n=|V(H)|\).
The assertion is trivial when \(H\) is edgeless. Otherwise $n\ge2$, and we set
\(t=q(n-2)+1\) and \(n_t=d\). For \(i=t,t-1,\ldots,1\), let
\(n_{i-1}=N(q,d,n_i)\), where \(N(q,d,n_i)\) is the integer given by
Lemma~\ref{lem:step} for \(m=n_i\).

Let \(G_0\) be the empty graph on a set \(V_0\) of size \(n_0\).
Starting from the \(d\)-admissible pair \((G_0,V_0)\), Builder applies
Lemma~\ref{lem:step} successively to obtain \(d\)-admissible refinements
\[
 (G_0,V_0)\longrightarrow (G_1,V_1)\longrightarrow\cdots
 \longrightarrow (G_t,V_t),
\]
where \(|V_i|=n_i\) and
\(V_0\supseteq V_1\supseteq\cdots\supseteq V_t\).
For each \(i=1,\ldots,t\), the lemma also gives a color \(c_i\) such that,
for every \(D\in\binom{V_i}{d}\), there is a vertex
\(x_{i,D}\in V_{i-1}\setminus V_i\) joined to every vertex of \(D\) by an
edge of color \(c_i\).
Since \((G_t,V_t)\) is \(d\)-admissible, \(G_t\) is \(d\)-degenerate. Every graph
drawn by Builder during the game is a subgraph of \(G_t\), and hence is also
\(d\)-degenerate.

Since \(t = q(n-2)+1\), the pigeonhole principle guarantees that at least one color, say red (the name is arbitrary), appears at least \(n-1\) times among \(c_1,\dots,c_t\). Select indices \(1\le j_1<\dots<j_{n-1}\le t\) such that each \(c_{j_k}\) is red.

For notational convenience, we relabel the relevant sets by setting
\[
W_0=V_{j_1-1}
\qquad\text{and}\qquad
W_i=V_{j_i}\quad (1\le i\le n-1).
\]
Then
\[
W_0\supseteq W_1\supseteq\cdots\supseteq W_{n-1}.
\]
Moreover, for each \(i=1,\ldots,n-1\) and every
\(D\in\binom{W_i}{d}\), there is a vertex
\(x_D\in W_{i-1}\setminus W_i\) joined in red to every vertex of \(D\).
To see this, note that the witness \(x_D\) for \(D\) in stage \(j_i\) lies in
\(V_{j_i-1}\setminus V_{j_i}\). Since \(j_i-1\ge j_{i-1}\) for \(i>1\),
we have \(V_{j_i-1}\subseteq V_{j_{i-1}}=W_{i-1}\), and therefore
\(x_D\in W_{i-1}\setminus W_i\). For \(i=1\), this is the definition.

Let \(u_n,\ldots,u_1\) be a \(d\)-degeneracy ordering of \(H\). Thus, for every
\(r\), the vertex \(u_r\) has at most \(d\) neighbors among
\(u_1,\ldots,u_{r-1}\). We construct an embedding
\(f\colon V(H)\to V_0\) inductively in the reverse order
\(u_1,\ldots,u_n\).

Choose \(f(u_1)\) arbitrarily in \(W_{n-1}\). Suppose that
\(f(u_1),\ldots,f(u_r)\) have been chosen and all lie in \(W_{n-r}\).
The images of the previously embedded neighbors of \(u_{r+1}\) form a
subset of \(W_{n-r}\) of size at most \(d\). Since \(|W_{n-r}|\ge d\), extend
this subset to some \(D\in\binom{W_{n-r}}{d}\). By the defining property
of \(W_{n-r}\), there is a vertex
\[
f(u_{r+1})=x_D\in W_{n-r-1}\setminus W_{n-r}
\]
joined in red to every vertex of \(D\). Thus all edges from \(u_{r+1}\) to
its previously embedded neighbors are mapped to red edges. Moreover,
\(f(u_{r+1})\) is distinct from the previously chosen vertices, and
\[
f(u_1),\ldots,f(u_{r+1})\in W_{n-r-1}.
\]
The induction therefore continues and produces a red copy of \(H\).
Thus Builder can force a monochromatic copy of \(H\) while
only drawing \(d\)-degenerate graphs. This shows \(f_q(d)\le d\).

Together with the lower bound \eqref{tri-low}, this gives
\(f_q(d)=d\), as desired.
\end{proof}

\section{Concluding remarks}

We have proved that \(f_q(d)=d\) for every \(q\ge2\) and \(d\ge1\).
This determines \(f_q(d)\) exactly and answers Problem~\ref{p:cfs}.

The proof does not aim to optimize the number of edges drawn by Builder.
Its quantitative bounds depend on the survival game theorem, hypergraph
Ramsey numbers, and repeated applications of Lemma~\ref{lem:step}.
It would be interesting to obtain substantially better bounds in terms
of $q$, $d$, and $|V(H)|$, while retaining the optimal degeneracy bound $d$.
In particular, can the use of the hypergraph Ramsey theorem be avoided?

Another natural direction is to study the same question for other graph
parameters. For maximum degree and treewidth, it is known that keeping
Builder's graph within the same parameter bound as the target graph does
not suffice in general.
For maximum degree, Butterfield et al.~\cite{BGKMSW} showed that, for
\(H=K_{\Delta+1}\), any winning strategy for Builder must create a graph of
maximum degree at least \(2\Delta-1\).
For treewidth, Choi et al.~\cite{CCJO} showed that, for
\(H=K_3\), Painter wins the online Ramsey game on graphs of
treewidth at most \(2\). Hence any winning strategy for Builder must
create a graph of treewidth at least \(3\).

This raises the question of how large a bound is necessary. More precisely,
let \(\pi\) denote either maximum degree or treewidth.
For \(q\ge2\) and \(d\ge1\), what is
the least bound \(f_q^\pi(d)\), if finite, such that Builder can force a
monochromatic copy of every graph \(H\) with \(\pi(H)\le d\) in the
\(q\)-color online Ramsey game while keeping \(\pi(G)\le f_q^\pi(d)\)
for the graph \(G\) drawn at every stage?

\section*{Declaration on the use of AI}
In studying the construction of Conlon, Fox and Sudakov~\cite{CFS},
we observed that, for a single star, \(q(d-1)+1\) edges are necessary
and sufficient to guarantee \(d\) edges of one color under an arbitrary
\(q\)-coloring. The key issue was therefore how to retain the desired
\(d\) same-colored neighbors without increasing the degeneracy.
We used generative AI (ChatGPT) to assist in discussing proof strategies.
During these discussions, ChatGPT suggested using the survival game
of Kierstead and Konjevod~\cite{KK} to address this issue.
This suggestion led us to the core idea of the proof of \(f_q(d)=d\).
ChatGPT also assisted with language editing.
All mathematical arguments and conclusions were reviewed and verified
by the authors, who take full responsibility for the content of the paper.

\end{document}